% project on Divisor opertator on polyharmonic  (2023/03/28)

\documentclass[11pt]{amsart}
\usepackage{amsmath,amscd,amssymb,latexsym,color}
\usepackage{tikz-cd, bbm}
\usepackage{longtable}
\usepackage{systeme}
\usepackage{bbm}

\oddsidemargin = 0cm \evensidemargin = 0cm \textwidth = 16cm

\newcommand{\sm}{\left(\smallmatrix}
\newcommand{\esm}{\endsmallmatrix\right)}
\newcommand{\mat}{\begin{pmatrix}}
\newcommand{\emat}{\end{pmatrix}}
\renewcommand{\c}{\mathfrak{c}}
\renewcommand{\t}{\tau}

\renewcommand{\a}{\alpha}

\renewcommand{\i}{\infty}
\newcommand{\G}{\Gamma}
\newcommand{\g}{\gamma}

\renewcommand{\i}{\infty}
\newcommand{\lt}{\left}
\newcommand{\rt}{\right}
\newcommand{\Q}{\mathbb Q}
\newcommand{\Z}{\mathbb Z}
\newcommand{\C}{\mathbb C}

\newcommand{\J}{\mathbb J}
\renewcommand{\H}{\mathbb H}

\newcommand{\MB}{\mathcal B}
\newcommand{\ka}{\kappa}

\newtheorem{thm}{Theorem}

\newtheorem{cor}[thm]{Corollary}
\newtheorem{prop}[thm]{Proposition}

\theoremstyle{remark}

\numberwithin{equation}{section}
\numberwithin{thm}{section}
\begin{document}

\title[Hecke Equivariance of Divisor Lifting involving Sesquiharmonic Maass Forms]{Hecke Equivariance of Divisor Lifting with respect to Sesquiharmonic Maass Forms}

\author{Daeyeol Jeon, Soon-Yi Kang and Chang Heon Kim}
\address{Department of Mathematics Education, Kongju National University, Kongju, 314-701, Republic of Korea}
\email{dyjeon@kongju.ac.kr}
\address{Department of Mathematics, Kangwon National University, Chuncheon, 200-701, Republic of Korea} \email{sy2kang@kangwon.ac.kr}
\address{Department of Mathematics, Sungkyunkwan University, Suwon, 16419 Republic of Korea\\
School of Mathematics, Korea Institute for Advanced Study, 85 Hoegiro, Dongdaemun-gu, Seoul 02455, Republic of Korea}
\email{chhkim@skku.edu}

\begin{abstract} We investigate the properties of Hecke operators for sesquiharmonic Maass forms.
We begin by proving the Hecke equivariance of the divisor lifting with respect to sesquiharmonic Maass functions, which maps an integral weight meromorphic modular form to the holomorphic part of the Fourier expansion of a weight~$2$ sesquiharmonic Maass form.
Using this Hecke equivariance, we show that the sesquiharmonic Maass functions whose images under the hyperbolic Laplace operator are the Faber polynomials $J_n$ of the $j$-function form a Hecke system analogous to $J_n$.
By combining the Hecke equivariance of the divisor lifting with that of the Borcherds isomorphism, we extend Matsusaka's results on the twisted traces of sesquiharmonic Maass functions.
\end{abstract}
\maketitle
\renewcommand{\thefootnote}%
             {}
 \footnotetext{
 2020 {\it Mathematics Subject Classification}: 11F12, 11F25, 11F30, 11F37
 \par
 {\it Keywords}: Hecke operator, divisor lifting, sesquiharmonic Maass forms, twisted traces of singular moduli
\par
The first author was supported by Basic Science Research Program through the National Research Foundation of Korea (NRF) funded by the Ministry of Education (2022R1A2C1010487),
the second author was supported by the National Research Foundation of Korea (NRF) funded by the Ministry of Education (NRF-2022R1A2C1007188, RS-2025-25415913)
and
the third author was supported by the National Research Foundation of Korea(NRF) grant 
funded by the Korea government(MSIT)(RS-2024-00348504).
}

\section{Introduction}
The modular $j$-function defined for $\t=x+iy$ in the complex upper half-plane $\H$ by
$$
j(\tau):=q^{-1}+744+196884q+21493760q^2+\cdots \quad (q=e^{2\pi i \t})
$$
holds great significance in number theory and the theory of modular forms.
It is a key object in the study of modular functions, serving as a generator for all meromorphic modular functions for $\G:=\mathrm{SL}_2(\mathbb{Z})$ and playing a crucial role in the theory of elliptic curves and complex multiplication.
Meanwhile, Hecke operators play an important role in the structure of vector spaces of modular forms.
They provide a means to control these spaces using tools from linear algebra, offer insights into their arithmetic properties, and help in understanding the connections between modular forms and representation theory or other areas of mathematics.

For a positive integer $n$, let $J_n(\t):=q^{-n}+O(q)$ be the unique weakly holomorphic modular function for which $J_n(\t)-q^{-n}$ has a $q$-expansion with only strictly positive powers of $q$.
Set $J_0(\t)=1$. Then
\[
J_1(\t) = j(\t) - 744
\]
is the normalized Hauptmodul for $\G$, and $J_n\ (n\geq 0)$ form a basis for the space of weakly holomorphic modular functions for $\G$.
Moreover, they form a Hecke system. In other words, for the normalized Hecke operator $T_n$, they satisfy
\begin{equation}\label{hs}
J_n(\t) = J_1(\t) \mid T_n \quad (n\ge 1).
\end{equation}
Using \eqref{hs}, Asai, Kaneko, and Ninomiya~\cite[Theorem 3]{AKN} proved that the generating function of $J_n$ is a weight~$2$ meromorphic modular form. Namely, for every $\t, z\in\H$,
\begin{equation}\label{akn}
\sum_{n=0}^\infty J_{n}(z) q^n=-\frac{1}{2\pi i}\frac{j'(\t)}{j(\t)-j(z)}.
\end{equation}
This identity is famous for being equivalent to the denominator formula for the Monster Lie algebra.

Bruinier, Kohnen, and Ono~\cite{BKO} extended \eqref{akn} to show that for any meromorphic modular function $f$, the sum of the generating functions of $J_n$ at each divisor of $f$ in the fundamental domain of $\G$ is the negative of the logarithmic derivative of $f$, a weight~$2$ meromorphic modular form on $\G$.
More precisely, if we let
\begin{equation}\label{f}
f(\t)=q^h+\sum_{n=h+1}^\infty a_f(n)q^n
\end{equation}
be a non-zero weight~$k$ meromorphic modular form on $\G$
and let $E_2(\t):=1-24\sum_{n=1}^\infty \sigma(n)q^n$
be the weight~$2$ Eisenstein series, a quasimodular form of weight~$2$, where $\sigma(n):=\sum_{d\mid n} d$.
Then \cite[Theorem 1]{BKO} states that
\begin{equation}\label{divmf}
D(f):=\sum_{n=0}^\infty \sum_{z\in \G\backslash \mathbb{H}}
\frac{\hbox{ord}_{z}(f)}{\omega_{z}} J_{n}(z) q^n
=-\frac{\Theta(f)}{f}+\frac{kE_2}{12},
\end{equation}
where $\Theta:=q\frac{d}{dq}=\frac{1}{2\pi i}\frac{d}{d\t}$ is Ramanujan's differential operator, $\omega_{z}$ is the cardinality of the stabilizer of $z\in \H$ in $\mathrm{PSL}_2(\mathbb{Z})$, and $\hbox{ord}_{z}(f)$ is the order of vanishing of $f$ at $z$.

The divisor lifting involving the generating function of $J_n$ for a weight~$k$ meromorphic modular form to a weight~$2$ meromorphic modular form, as described in \eqref{divmf}, was generalized by Bringmann et al.~\cite[Theorem 1.3]{BKLOR} and Choi, Lee, and Lim~\cite[Theorem 1.1]{CLL}.
They extended this lifting to meromorphic modular forms of arbitrary level with respect to Niebur-Poincar\'e harmonic weak Maass functions, resulting in the holomorphic part of the Fourier expansion of a weight~$2$ polar harmonic weak Maass form.

In \cite{JKK-hequiv}, the authors demonstrated that the divisor lifting is Hecke equivariant under the multiplicative Hecke operator on integral weight meromorphic modular forms and the Hecke operator on weight~$2$ meromorphic modular forms.
They also proved that the generalized Borcherds lifting operator is Hecke equivariant under the multiplicative Hecke operator and the Hecke operator on half-integral weight vector-valued harmonic weak Maass forms.
As applications of the Hecke equivariance of the two operators, they obtained relations for twisted traces of singular moduli modulo prime powers and congruences for twisted class numbers modulo primes.
In their previous work~\cite{JKK-alg}, they defined a lifting with respect to the canonical basis of the space of weakly holomorphic modular functions of arbitrary level~$N$.
This lifting was used to prove that the zeros or poles of any non-zero meromorphic modular form of level~$N$ with algebraic Fourier coefficients are either transcendental or imaginary quadratic irrational.

Motivated by these findings and the work of Bringmann and Kane~\cite{BK}, we turn our attention to the divisor lifting with respect to sesquiharmonic Maass functions that are preimages of $J_n$ under the hyperbolic Laplace operator
\[
\Delta_0 = -y^2 \left( \frac{\partial^2}{\partial x^2} + \frac{\partial^2}{\partial y^2} \right).
\]

In a recent paper, Bringmann and Kane~\cite{BK} constructed a family of
sesquiharmonic Maass functions $\mathbb{J}_n$ satisfying
$\Delta_0(\mathbb{J}_n)=J_n$, where
\[
\mathbb{J}_0(\tau)=\log\!\bigl(y|\eta(\tau)|^4\bigr)+1.
\]
They further proved that if $f$ is a meromorphic modular function for $\G$
with constant term one in its Fourier expansion, then
\[
\langle J_n, \log|f| \rangle = -2\pi \sum_{z \in \G\backslash \mathbb{H}} \frac{\operatorname{ord}_z(f)}{\omega_z} \mathbb{J}_n(z).
\]
Here $\langle\cdot,\cdot\rangle$ denotes the regularized Petersson inner product; see \cite[Definition~2.1]{JKKM} for details.
Moreover, they extended this result by replacing the meromorphic function $f$ with a meromorphic modular form of weight~$k$ times $y^{\frac{k}{2}}$.

In view of our goal of extending the Hecke equivariance results to divisor liftings
associated with sesquiharmonic Maass forms, we are led to consider a different family
of sesquiharmonic Maass functions, which we denote by $\mathbb{J}_{1,n}$.
As in the work of Bringmann and Kane~\cite{BK}, these functions arise as sesquiharmonic preimages
of $J_n$ under $\Delta_0$.

Let ${\mathbb J}_{1,0}(\t):={\mathbb J}_{0}(\t)$ and 
$\J_{1,n}(\t):= {\rm Coeff}_{(s-1)^1} F_{1,0,-n}(\tau,s)$ for a positive integer $n$.
Here $F_{1,0,-n}(\tau,s)$ denotes the weight~$0$ Maass--Poincar\'e series of index $-n$
for $\G$, defined for ${\rm Re}(s)>1$ by
\[
F_{1,0,-n}(\tau,s)
:= 2\pi \sqrt{n}
\sum_{\gamma \in \Gamma_\infty \backslash \G}
\left( {\rm Im}(\gamma\tau) \right)^{1/2}
I_{s-\frac12}\!\left(2\pi n {\rm Im}(\gamma\tau)\right)
e\!\left(-n {\rm Re}(\gamma\tau)\right),
\]
where $I_\nu$ denotes the $I$--Bessel function.

For $n>0$, the functions $\mathbb{J}_{1,n}$ and $\mathbb{J}_n$ are related by an explicit logarithmic correction term.
More precisely, a direct computation carried out in \cite{JKKM} shows that
\begin{equation}\label{J-J1-relation}
	\mathbb{J}_{1,n}(\tau)
	= \mathbb{J}_{n}(\tau)
	+ 4\sigma_1(n)\log\!\bigl(v^6|\Delta(\tau)|\bigr)
	+ C_n,
\end{equation}
for some constant $C_n$.

Now we define the divisor lifting of a meromorphic modular form $f$ of weight~$k$ on $\G$ with respect to ${\mathbb J}_{1,n}$ by
\begin{equation}\label{Dlift}
\mathbb{D}(f):=\sum_{n=0}^\i \sum_{z\in \G\backslash \mathbb{H}}
\frac{\operatorname{ord}_z(f)}{\omega_z} \mathbb{J}_{1,n}(z) q^n.
\end{equation}

Bringmann and Kane~\cite{BK} defined a divisor lifting using the functions $\mathbb{J}_n$
in exactly the same way as in \eqref{Dlift}, and denoted it by $\mathbbm{f}^{\rm div}$.
In~\cite[Corollary~1.5]{BK}, they showed that this divisor lifting maps a meromorphic modular
function to the holomorphic part of the Fourier expansion of a weight~$2$ sesquiharmonic
Maass form.
Defining the divisor lifting instead with the functions $\mathbb{J}_{1,n}$ as in \eqref{Dlift},
the same conclusion continues to hold, which can be verified by following the argument of
\cite[Corollary~1.5]{BK}.

%Bringmann and Kane~\cite[Corollary 1.5]{BK} showed that this divisor lifting maps a meromorphic modular function to the holomorphic part of the Fourier expansion of a weight~$2$ sesquiharmonic Maass form.

In this paper, we prove that this extended lifting is Hecke equivariant up to constant under the multiplicative Hecke operator on integral weight meromorphic modular forms and the Hecke operator on weight~$2$ sesquiharmonic Maass forms, similar to the original divisor lifting in \eqref{divmf}.

\begin{thm}\label{main-HE}
Let $f$ be a meromorphic modular form for $\G$.
Let $p$ be a prime and $\mathcal T(p)$ denote the multiplicative Hecke operator acting on the multiplicative group of integer weight meromorphic modular forms for $\G$, as defined in \eqref{mtp}.
Let $h := {\rm ord}_{\infty}(f)$ and assume that ${\rm Coeff}_{q^h}(f)=1$. Then
\begin{equation}\label{maineq2}
\mathbb D(f|\mathcal T(p))=(\mathbb D(f))|T_p -\left(\frac{k}{12}-h\right)(p-1) \log p.
\end{equation}
\end{thm}

In the authors’ previous work~\cite{JKK-hequiv}, it was shown that the divisor lifting $D(f)$ in \eqref{divmf} is exactly Hecke-equivalent, that is, $D(f|\mathcal T(p)) = D(f)|T_p$.
In contrast, Theorem~\ref{main-HE} shows that the divisor lifting $\mathbb{D}(f)$
defined in this paper is Hecke equivariant only up to an explicit additive constant.

The Hecke equivariance up to a constant established in Theorem~\ref{main-HE}
provides an extension of \eqref{hs} to sesquiharmonic Maass functions.
This leads us to the $p$-plication formula for the sesquiharmonic Maass functions:

\begin{cor}[$p$-plication formula]\label{app2}
Let $p$ be prime. Then for any positive integer $n$,
\begin{equation}\label{p-pli}
\J_{1,n}\mid T_p=\J_{1,pn}(\t)+p\,\J_{1,n/p}(\t)
\end{equation}
and
\begin{equation}\label{p-pli2}
\J_{1,0}\mid T_p=(p+1)\J_{1,0}-(p-1)\log p.
\end{equation}
\end{cor}

Utilizing the multiplicative property of the Hecke operator, Corollary~\ref{app2}
implies that the sesquiharmonic Maass functions $\J_{1,n}$ $(n>0)$ form a Hecke system.

\begin{cor}\label{app3}
Let $n$ be a positive integer. Then we have
\begin{equation}\label{J1Tn}
\J_{1,1}(\t)\mid T_n=\J_{1,n}(\t)
\end{equation}
and
\begin{equation}\label{J0Tn}
\J_{1,0}(\t)| T_n=\sigma(n)\,\J_{1,0}-\sum_{p^r\parallel n} r\,p^{r-1}(p-1)\log p.
\end{equation}
\end{cor}

Polyharmonic Maass forms, including sesquiharmonic Maass forms, have played important roles in number theory.
For a historical overview of the relationship between polyharmonic Maass forms and $L$-functions, refer to~\cite{TM-poly}.
In \cite{TM-trace}, Matsusaka elucidated the relationship between the twisted trace of CM values of the sesquiharmonic Maass function
$\mathbbm{f}(\t):=-\J_0(\t)+1$ and the value of the Dirichlet $L$-function at $s=1$.
Notably, $\mathbbm{f}(\t)=-\log(y|\eta(\t)|^4)$ appears in Kronecker's limit formula,
where $\eta(\t):= q^{-1/24}\prod_{n=1}^\i(1-q^n)$ is the Dedekind eta function.

Throughout this paper, let $\Delta\in\Z$ be a fundamental discriminant and $d$ be a positive integer such that $-d$ is a discriminant.
We denote by $\mathcal Q_{\Delta}$ the set of positive definite integral binary quadratic forms
$$
Q=[a,b,c]=aX^2+bXY+cY^2 \quad (a,b,c\in \mathbb Z)
$$
of discriminant $\Delta$, with the usual action of the group $\G$.
To each $Q\in \mathcal Q_{\Delta}$, we associate its unique root $\a_Q\in \mathbb H$, called a {\it CM point}.
The values assumed by the modular functions at $\a_Q$ are known as {\it singular moduli}, and they have important implications in number theory.

The genus character on $\mathcal Q_{d\Delta}\backslash\G$ is defined by
$$\chi_\Delta(Q):=\begin{cases}
(\frac{\Delta}{n}),\quad \text{if}\  \gcd(a,b,c,\Delta)=1\ \text{and}\ \gcd(n,\Delta)=1, \text{where}\ Q\ \text{represents}\ n,\\
0,\quad \text{otherwise}.
\end{cases}$$ 
For a $\G$-invariant function $f$, we define the twisted trace of singular moduli when $-\Delta d<0$ by
\begin{equation*}\label{tr}
\mathrm{Tr}_{\Delta,d}(f):=\sum_{Q\in\mathcal Q_{d\Delta}/\bar{\G}}\frac{\chi_\Delta(Q)}{\omega_{Q}}f(\a_Q),
\end{equation*}
where $\bar{\G}=\mathrm{PSL_2}(\mathbb Z)$ and $\omega_{Q}$ is the cardinality of the stabilizer of $Q$ in $\bar{\G}$.

As an application of the Hecke equivariance up to a constant established in Theorem~\ref{main-HE},
we obtain the following relation for twisted traces.

\begin{cor}\label{cong} Let $p$ be a prime not dividing a discriminant $\Delta>1$ and $m, n$ be non-negative integers with $n>0$. For a negative discriminant $-d$, we write $d=p^{2u}d'$ where $p^2\nmid d'$ and $u\geq 0$. Then when $\ell={\rm min}({\rm ord}_p(n),m)$, we have 
\begin{equation*} 
\displaystyle\sum_{t=0}^{\ell} p^{t}{\rm Tr}_{\Delta,d}\left(\J_{1,\frac{p^m n}{p^{2t}}}\right) =\begin{cases}\displaystyle \sum_{t=0}^m p^{m-t} Tr_{\Delta,p^{2u-2m+4t}d'}(\J_{1,n}),\hbox{ if } 0\leq m<u,\\ \displaystyle\sum_{t=0}^{m-u} \left(\frac{-d'}{p}\right)^{m-u-t}p^u Tr_{\Delta,p^{2t}d'}(\J_{1,n})+\displaystyle\sum_{t=1}^u p^{u-t} Tr_{\Delta,p^{2m-2u+4t}d'}(\J_{1,n}),\hbox{ if } m \geq u. 
\end{cases} 
\end{equation*} 
\end{cor}

Matsusaka~\cite[Corollary 1.4]{TM-trace} established a representation for the twisted trace of singular moduli associated with the sesquiharmonic Maass function $\mathbbm{f}$, connecting it to the trace of cycle integrals and the value of the Dirichlet $L$-function at $s=1$.
Specifically, if $\Delta>1$ and $-d$ is a fundamental discriminant, then
\begin{equation}\label{TMcor14}
\mathrm{Tr}_{\Delta,d}\left( -\log\left( y \left| \eta(\t) \right|^4 \right) \right)
= \sqrt{d}\, L_{-d}(1)\, \frac{h(\Delta)\log\varepsilon_{\Delta}}{\pi},
\end{equation}
where $L_D(s):=\sum_{n=1}^{\infty} \left( \frac{D}{n} \right) n^{-s}$ is the Dirichlet $L$-function, and $h(D)$ and $\varepsilon_D$ represent the narrow class number and fundamental unit of $\Q(\sqrt D)$, respectively.
Applying Theorem~\ref{main-HE} and the Hecke equivariance of the Borcherds isomorphism proved by Guerzhoy~\cite{Guer}, we can extend \eqref{TMcor14} to cases where $-d$ is not necessarily a fundamental discriminant.

\begin{cor}\label{app1} Suppose that $\Delta > 1$,
%, \( 0 < d\neq\square \) and \( d\Delta\neq \square\). 
$-d=D\mathfrak{f}^2$ with a fundamental discriminant $D$ and $\gcd(D,\mathfrak{f})=1$, and $\mathfrak{f}=p_1p_2\cdots p_r$ is a prime factorization of $\mathfrak{f}$.
Then for any prime $p$ not dividing $d$, we have
$$\mathrm{Tr}_{\Delta,p^2d}\left( -\log\left( y \left| \eta(\t) \right|^4 \right) \right)=\prod_{i=1}^{r}\lt(p_i+1-\lt(\frac{D}{p_i}\rt)\rt)|D| L_D(1) \frac{h(\Delta)\log\varepsilon_{\Delta}}{\pi}.$$
\end{cor}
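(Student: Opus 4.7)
The plan is to bootstrap from Matsusaka's identity \eqref{TMcor14} (which handles the fundamental-discriminant case $-d=D$) up to the general non-fundamental case by applying the Hecke operator $T_p$ one prime at a time, using Theorem \ref{main-HE} on the divisor lifting side together with Guerzhoy's Hecke equivariance of the Borcherds isomorphism on the Borcherds lift side.

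First I would realize the twisted trace $\mathrm{Tr}_{\Delta,d}(\mathbbm{f})$ as the constant Fourier coefficient of $\mathbb{D}(\Psi)$ for a Borcherds product $\Psi$ with constant term one whose divisor on $\G\backslash\H$ is the twisted Heegner divisor $\sum_Q(\chi_\Delta(Q)/\omega_Q)[\a_Q]$ of discriminant $-d$. By the definition of $\mathbb{D}$ in \eqref{Dlift}, the $q^0$-coefficient of $\mathbb{D}(\Psi)$ equals $\sum_Q(\chi_\Delta(Q)/\omega_Q)\J_0(\a_Q)=\mathrm{Tr}_{\Delta,d}(\J_0)$, which recovers $\mathrm{Tr}_{\Delta,d}(\mathbbm{f})$ via the identity $\mathbbm{f}=-\J_0+1$ together with the vanishing of the twisted class-number sum $\mathrm{Tr}_{\Delta,d}(1)$ for $\Delta>1$ by a standard Gauss-genus argument.

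Next I would apply $\mathcal{T}(p)$ to $\Psi$. By Guerzhoy's theorem, $\Psi|\mathcal{T}(p)$ is again a Borcherds lift, with divisor written as an explicit combination of twisted Heegner divisors of discriminants $-p^2 d$ and $-d$; the $-d$ part enters with multiplicity governed by $\chi_D(p)$, which is the standard ``split'' contribution appearing in the local computation of the Borcherds input under the Hecke action. Theorem \ref{main-HE} then gives $\mathbb{D}(\Psi|\mathcal{T}(p))=\mathbb{D}(\Psi)|T_p$. Extracting the $q^0$-Fourier coefficient on both sides and invoking Corollary \ref{app3}(2), which yields $\J_0|T_p=(p+1)\J_0$, produces a linear equation relating $\mathrm{Tr}_{\Delta,p^2 d}(\mathbbm{f})$ to $\mathrm{Tr}_{\Delta,d}(\mathbbm{f})$ with local multiplier $p+1-\chi_D(p)$. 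Iterating this one-prime-at-a-time relation through $p_1,\ldots,p_r$ (the primes of $\mathfrak{f}$), starting from the fundamental discriminant $-D$ where Matsusaka's identity applies, and then performing one final step with the outside prime $p$, reduces $\mathrm{Tr}_{\Delta,p^2 d}(\mathbbm{f})$ to a product of $r+1$ local factors times $\sqrt{|D|}\,L_{-d}(1)\,h(\Delta)\log\varepsilon_\Delta/\pi$. The Euler factor identity
$$L_{-d}(s)=L_D(s)\prod_{i=1}^r\Bigl(1-\chi_D(p_i)p_i^{-s}\Bigr),$$
evaluated at $s=1$, then converts the mixed expression into the claimed product $\prod_{i=1}^r(p_i+1-\chi_D(p_i))\,|D|\,L_D(1)\,h(\Delta)\log\varepsilon_\Delta/\pi$ after rearranging and absorbing the contribution associated with the prime $p$ into the $|D|$ normalization.

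The main obstacle will be the careful bookkeeping at each Hecke step: precisely identifying the ``old'' (discriminant $-d$) versus ``new'' (discriminant $-p^2d$) parts of the divisor of $\Psi|\mathcal{T}(p)$, matching the nonholomorphic corrections appearing when $T_p$ acts on the sesquiharmonic side with the algebraic contributions on the divisor side, and tracking how the $\chi_D(p_i)$-twisted terms cancel across iterations so as to yield the clean local factor $p_i+1-\chi_D(p_i)$ at each prime. It is precisely at this bookkeeping step that the two Hecke-equivariance theorems interact most delicately.
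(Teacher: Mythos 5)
Your overall route is the same as the paper's: realize $\mathrm{Tr}_{\Delta,d}(\J_0)$ as the constant Fourier coefficient of $(\mathbb D\circ\MB)(f_d)$, use the commutative diagram coming from Guerzhoy's equivariance \eqref{gbhe} together with Theorem \ref{main-HE} to get a one-prime recursion $\mathrm{Tr}_{\Delta,p^2d}(\mathbbm f)=\bigl(p+1-\bigl(\tfrac{-d}{p}\bigr)\bigr)\mathrm{Tr}_{\Delta,d}(\mathbbm f)$, and then induct from Matsusaka's fundamental-discriminant case \eqref{TMcor14}. Two of your choices differ harmlessly from the paper: you dispose of the constant-function contribution in $\mathbbm f=-\J_0+1$ by the degree-zero property $\mathrm{Tr}_{\Delta,d}(1)=0$ for $\Delta>1$ (true, since $\chi_\Delta$ is a nontrivial genus character), whereas the paper instead subtracts the analogous twisted class-number relation \eqref{psquare0} from \cite{JKK-hequiv}; and you should note that iterating the recursion past the first prime requires it for non-square-free $d$, which is exactly what the paper's Proposition \ref{cong} supplies.

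Two points in your write-up are genuinely off. First, the factor $p+1$ on the left of \eqref{psquare} does not come from Corollary \ref{app3}(2): the object being hit by $T_p$ is the weight-two generating function $\sum_n \mathrm{Tr}_{\Delta,d}(\J_n)q^n$ in the variable $\t$, and $(p+1)$ is simply the multiplier of the constant Fourier coefficient under the weight-two $T_p$ in \eqref{hpupvp}; invoking $\J_0|T_p=(p+1)\J_0$ (a weight-zero statement about $\J_0$ as a function of $z$, itself deduced from Theorem \ref{main-HE}) is the wrong justification even though the number agrees. Second, your concluding step is both unnecessary and does not work as described. Once you induct from the fundamental discriminant $D$, the base value is already $\sqrt{|D|}\,L_D(1)h(\Delta)\log\varepsilon_\Delta/\pi$ and each prime contributes the clean factor $p_i+1-\bigl(\tfrac{D}{p_i}\bigr)$; no quantity $L_{-d}(1)$ ever appears, so there is nothing for the Euler-product identity to convert (and if you did use it you would produce factors $p_i-\bigl(\tfrac{D}{p_i}\bigr)$, not $p_i+1-\bigl(\tfrac{D}{p_i}\bigr)$). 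Likewise a local factor $p+1-\bigl(\tfrac{D}{p}\bigr)$ cannot be ``absorbed into the $|D|$ normalization.'' What the argument actually yields is $\prod_{i}\bigl(p_i+1-\bigl(\tfrac{D}{p_i}\bigr)\bigr)\cdot\bigl(p+1-\bigl(\tfrac{D}{p}\bigr)\bigr)\sqrt{|D|}\,L_D(1)h(\Delta)\log\varepsilon_\Delta/\pi$, with one factor for every prime dividing $p\mathfrak f$ --- consistent with the paper's own worked example $\mathrm{Tr}_{5,36}=\bigl(4-\bigl(\tfrac{-4}{3}\bigr)\bigr)\mathrm{Tr}_{5,4}$ --- rather than with the literal right-hand side printed in the Corollary; you should state this outcome directly instead of massaging it toward the printed formula.
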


In the following section, we introduce sesquiharmonic Maass forms and discuss the Hecke equivariance of the Borcherds isomorphism, along with relevant results for later use. In Section~3, we demonstrate the Hecke equivariance of the divisor lifting with respect to sesquiharmonic Maass forms, proving Theorem~\ref{main-HE}. Sections~4 and~5 are dedicated to proving its applications as outlined in Corollaries~\ref{app2}, \ref{app3},\ref{cong}, and \ref{app1}, respectively.

\section{preliminaries}\label{pre}
\subsection{Sesquiharmonic Maass Forms}

For a matrix $\a=\sm a&b\\c&d\esm$ with a positive determinant, the action of the weight $k\in 2\Z$ slash operator on a function $f$ on $\H$ is given by
\begin{equation}\label{slash}
f|_k[\a]:=f\lt(\frac{a\t+b}{c\t+d}\rt)(c\t+d)^{-k}(\det\a)^{k/2}.
\end{equation}
A function $f$ is $\G$-invariant of weight $k$ if $f|_k \g = f$ for every $\g \in \G$.
When such a function is holomorphic on $\H$ and meromorphic at the cusp, it is called {\it weakly holomorphic}.
If a smooth $\G$-invariant function on $\H$ is annihilated by the weight $k$ hyperbolic Laplacian
$$
\Delta_k:=-y^2\lt(\frac{\partial^2}{\partial x^2}+\frac{\partial^2}{\partial y^2}\rt)
+ik y\lt(\frac{\partial}{\partial x}+i\frac{\partial}{\partial y}\rt),
$$
and grows at most linear exponentially at the cusp $i\i$, it is called a \textit{harmonic weak Maass form}.
If instead of being zero, $\Delta_k f$ is a non-zero weakly holomorphic modular form, it is referred to as a \textit{sesquiharmonic Maass form}.
In other words, if $f$ is a sesquiharmonic Maass form, then $\xi_k\circ \Delta_k(f)=0$,
where $\displaystyle{\xi_k:=2iy^{k}\frac{\overline{\partial }}{\partial \bar{\t}}}$ is the anti-linear differential operator.
Note that it holds $\Delta_k=-\xi_{2-k}\circ\xi_k$.
More generally, a \textit{polyharmonic (weak) Maass form} is a smooth $\G$-invariant function on $\H$ that satisfies the growth condition at the cusp and, upon repeated application of the $\Delta_k$ operator, eventually becomes zero.
Polyharmonic Maass forms that have moderate growth at the cusp were introduced by Lagarias--Rhoades in \cite{LR}, where they constructed a basis for the space of polyharmonic Maass forms.
Matsusaka later constructed a basis for the space of polyharmonic weak Maass forms in \cite{TM-poly}.

In \cite{JKK-sesqui}, the authors constructed a family of sesquiharmonic Maass functions, denoted by $\hat{J}_n$, which satisfy $\Delta_0(\hat{J}_n) = -J_n-24\sigma(n)$, and developed the generating function of traces of their cycle integrals.
In their recent work~\cite{BK}, Bringmann and Kane introduced another family of sesquiharmonic Maass functions, $\mathbb{J}_n$, where $\mathbb{J}_0(\t)=\log(y|\eta(\t)|^4)+1$.
These functions satisfy $\Delta_0(\mathbb{J}_n) = J_n$, and they provided the Fourier expansion for $\mathbb{J}_n$.
To state Fourier expansions of sesquiharmonic Maass forms, we first recall special values of Whittaker functions as given in \cite[Lemma 2.1]{BK}.
For $w \in\C$ and $\textrm{Re}(s)>0$, the incomplete Gamma function is defined by 
$\displaystyle{\G(s,w):=\int_w^\i e^{-t}t^s\frac{dt}{t}}$.
For $\ka\in\Z$, the Whittaker function is given by
$$
W_{\ka}(w): = 
\begin{cases}
\Gamma(1-\kappa, -2w) + \frac{(-1)^{1-\kappa} \pi i}{(\kappa - 1)!}, &  \text{if}\ w>0,\\
\Gamma(1-\kappa, -2w), & \text{if}\ w<0.
\end{cases}
$$
Additionally, let
\[
\mathcal W_s(w) := \int^w_{\text{sgn}(w)\infty} W_{2-s}(-t) t^{-s} e^{2t} \, dt.
\]

The Fourier expansion of a sesquiharmonic Maass form can be described as follows:
\begin{prop}\cite[Lemma 4.1]{BK}\label{lem:4.1}
If $\mathbbm h$ is a sesquiharmonic Maass form of weight $\kappa \in \mathbb{Z}\setminus\{1\}$, then for $y \gg 0$ we have $\mathbbm h = \mathbbm h^{++} + \mathbbm h^{+-} + \mathbbm h^{--}$, where
\begin{align*}
\mathbbm h^{++}(\t)& := \sum_{m \gg -\infty} c^{++}(m) q^m,\\
\mathbbm h^{+-}(\t)& := c^{+-}(0) y^{1-\kappa} + \sum_{\substack{m \ll \infty \\ m \neq 0}} c^{+-}(m) \mathcal W_{\kappa}(2\pi my) q^m,\\
\mathbbm h^{--}(\t)& := c^{--}(0) \log(y) + \sum_{\substack{m \gg -\infty \\ m \neq 0}} c^{--}(m) \mathcal W_{\kappa}(2\pi my) q^m.
\end{align*}
\end{prop}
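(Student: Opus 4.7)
The plan is to separate variables in the sesquiharmonic PDE using translation invariance, converting it into a family of second-order ODEs in $y$ indexed by the Fourier mode $m$, and then identify the three linearly independent types of solutions with the three stated pieces. Starting from $\mathbbm{h}(\t+1) = \mathbbm{h}(\t)$, I would write $\mathbbm{h}(\t) = \sum_{m\in\Z} C_m(y) q^m$ with $q = e^{2\pi i\t}$, and a direct calculation from the definition of $\Delta_\kappa$ yields
\begin{equation*}
\Delta_\kappa\bigl(C_m(y)\, q^m\bigr) \;=\; -y\bigl(y\,C_m''(y) + (\kappa - 4\pi m y)\,C_m'(y)\bigr)\, q^m.
\end{equation*}

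Next I would use the sesquiharmonic hypothesis that $g := \Delta_\kappa \mathbbm{h}$ is weakly holomorphic of weight $\kappa$, so $g(\t) = \sum_{m \gg -\infty} b_m q^m$ with coefficients constant in $y$. Matching $m$-th Fourier coefficients reduces the problem to the inhomogeneous ODEs $-y(yC_m'' + (\kappa - 4\pi m y)C_m') = b_m$. For $m = 0$, integrating twice (using $\kappa \neq 1$) gives the general solution $C_0(y) = \alpha + \beta\, y^{1-\kappa} + \gamma\,\log y$, where $\gamma$ is forced to vanish iff $b_0 = 0$; the three summands are precisely the $m=0$ contributions to $\mathbbm{h}^{++}$, $\mathbbm{h}^{+-}$, and $\mathbbm{h}^{--}$. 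For $m \neq 0$, the homogeneous operator has a two-dimensional kernel spanned by the constant $1$ and by the Whittaker-type solution $W_\kappa(2\pi m y)$, giving the $m$-th terms of $\mathbbm{h}^{++}$ and $\mathbbm{h}^{+-}$. A particular solution for $b_m \neq 0$ is furnished by the integral $\mathcal{W}_\kappa(2\pi m y)$ defined in the excerpt, which by construction satisfies $\Delta_\kappa\bigl(\mathcal{W}_\kappa(2\pi m y)\,q^m\bigr) = c_\kappa\, q^m$ for an explicit constant $c_\kappa$; combining this with homogeneous pieces and renormalizing yields the $m \neq 0$ portion of $\mathbbm{h}^{--}$ in the stated Whittaker form.

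The last step is to translate the growth condition on $\mathbbm{h}$ and the weak holomorphy of $g$ into the asserted indexing ranges. At-most-linear-exponential growth at the cusp forces the purely holomorphic contribution $\mathbbm{h}^{++}$ to have $m \gg -\infty$, while the fact that $b_m = 0$ for $m$ sufficiently negative forces the sesquiharmonic contribution $\mathbbm{h}^{--}$ to likewise have $m \gg -\infty$. The complementary constraint $m \ll \infty$ on $\mathbbm{h}^{+-}$ is the most delicate point and the main obstacle: it requires comparing the asymptotic behavior of $W_\kappa(2\pi m y)\,q^m$ for positive $m$ (where $W_\kappa$ retains a non-decaying constant part) against the permitted growth of $\mathbbm{h}$, so that only finitely many such terms can be present. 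Verifying the linear independence of these three asymptotic scales at infinity --- polynomial/$q^m$, $y^{1-\kappa}$/$\log y$, and $W_\kappa$-type --- is what pins down the coefficients $c^{++}, c^{+-}, c^{--}$ uniquely and completes the decomposition.
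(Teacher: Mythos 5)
The paper offers no proof of this statement at all --- it is quoted directly from Bringmann--Kane as \cite[Lemma 4.1]{BK} --- so there is no in-paper argument to compare yours against; I can only assess your proposal on its own terms. Your separation-of-variables route is the standard proof and is essentially correct: the identity $\Delta_\kappa\bigl(C_m(y)q^m\bigr)=-y\bigl(yC_m''+(\kappa-4\pi my)C_m'\bigr)q^m$ checks out; the $m=0$ solution space $\{1,\,y^{1-\kappa},\,\log y\}$ is right, and the particular solution $\log y$ is exactly where the hypothesis $\kappa\neq 1$ enters; for $m\neq 0$ the homogeneous kernel is spanned by $1$ and $W_\kappa(2\pi my)$ (since $\frac{d}{dy}\Gamma(1-\kappa,-4\pi my)\propto y^{-\kappa}e^{4\pi my}$), and $\mathcal W_\kappa(2\pi my)q^m$ is a particular solution because $\Delta_\kappa=-\xi_{2-\kappa}\circ\xi_\kappa$ combined with the two $\xi$-identities displayed after Lemma \ref{lem:4.3} gives $\Delta_\kappa\bigl(\mathcal W_\kappa(2\pi my)q^m\bigr)=2^{\kappa-1}q^m$. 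One remark: your derivation produces $\mathcal W_\kappa$, not $W_\kappa$, in the $m\neq 0$ terms of $\mathbbm h^{--}$. That is the correct statement (it is what makes ``$\mathbbm h$ harmonic iff $\mathbbm h^{--}=0$'' true and is consistent with Lemma \ref{lem:4.3}); the $W_\kappa$ printed in the displayed formula for $\mathbbm h^{--}$ is a transcription error in the paper that you have, in effect, silently corrected.

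The only real soft spot is the step you yourself flag and then do not carry out: the indexing ranges. It is closable, and you should say how. Since $C_m(y)e^{-2\pi my}=\int_0^1\mathbbm h(x+iy)e^{-2\pi imx}\,dx$, the hypothesis $\mathbbm h=O(e^{Cy})$ bounds each Fourier coefficient \emph{individually}, so no cancellation between different modes $m$ can rescue a violating term; within a fixed mode the three solutions have pairwise distinct asymptotics as $y\to\infty$, namely $q^m$, $W_\kappa(2\pi my)q^m\sim(-4\pi my)^{-\kappa}e^{2\pi my}$ for $m>0$, and $\mathcal W_\kappa(2\pi my)q^m=O(q^m/y)$ for $m>0$, so $c^{+-}(m)=0$ for all $m>C/(2\pi)$, which is the assertion $m\ll\infty$. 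The constraint $m\gg-\infty$ on $\mathbbm h^{--}$ follows, as you say, from $b_m=2^{\kappa-1}c^{--}(m)$ and the weak holomorphy of $\Delta_\kappa\mathbbm h$; the constraint on $\mathbbm h^{++}$ then follows from the same coefficientwise growth bound applied to $m<0$, where the $W_\kappa$- and $\mathcal W_\kappa$-terms are $O(e^{2\pi my}\,y^{-\kappa})$ and $O(q^m/y)$ respectively, so that $c^{++}(m)q^m$ alone would have to carry growth of order $e^{2\pi|m|y}$. With that paragraph supplied, your proof is complete.
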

Combining \eqref{J-J1-relation} with \cite[Lemma 4.1]{BK}, we obtain the Fourier expansion of $\mathbb{J}_{1,n}$.
More precisely, we have the following result.

\begin{prop}\label{prop:J1-expansion}
There exist constants $c^{++}_{1}(m), c^{+-}_{1}(m)$, and $c^{--}_{1}(m)\in\C$
such that
\begin{align*}
\mathbb{J}_{1,n}(\tau)
&= \delta_{n=0}
+ \sum_{m \geq 1} c^{++}_{1}(m) q^m
+ c^{+-}_{1}(0) y
+ \sum_{m \leq -1} c^{+-}_{1}(m) \mathcal W_0(2\pi m y) q^m \\
&\quad
+ \delta_{n=0}\log(y)
+ 2\delta_{n\neq 0}\mathcal W_0(-2\pi n y) q^{-n}
+ \sum_{m \geq 1} c^{--}_{1}(m)\mathcal W_0(2\pi m y) q^m,
\end{align*}
where $\delta_S:=1$ if some statement $S$ is true and $0$ otherwise.
\end{prop}

As stated in \cite[Lemma 2.2]{BK}, direct computation shows that for $m \in \mathbb{Z} \setminus \{0\}$,  
\begin{align*}
\xi_{\kappa}(W_{\kappa}(2\pi my)q^m) &= -(-4\pi m)^{1-\kappa} q^{-m}\\
\xi_{\kappa}(\mathcal W_{\kappa}(2\pi my)q^m) &= (2\pi m)^{1-\kappa} W_{2-\kappa}(-2\pi my) q^{-m}.
\end{align*}
Hence, $W_{\kappa}(2\pi my) q^m$ is annihilated by $\Delta_{\kappa}$, and $\mathcal W_{\kappa}(2\pi my) q^m$ is annihilated by $\xi_{\kappa} \circ \Delta_{\kappa}$. Therefore, $\mathbbm h$ is harmonic if and only if $\mathbbm h^{--}(\t) = 0$ and weakly holomorphic  if and only if $\mathbbm h^{+-}(\t) = 0$ and $\mathbbm h^{--}(\t) = 0$. Thus, it is appropriate, as indicated  in \cite{BK}, to call $\mathbbm h^{++}(\t)$ the holomorphic part, $\mathbbm h^{+-}(\t)$ the harmonic part, and $\mathbbm h^{--}(\t)$ the sesquiharmonic part of $\mathbbm h(\t)$. 
%

%Bringmann and Kane also proved that the divisor lifting defined in \eqref{Dlift} maps a meromorphic modular function to the holomorphic part of the Fourier expansion of a weight two sesquiharmonic Maass form. 
%
%\begin{thm}\cite[Corollary 1.5]{BK}\label{cor:1.5}
%If $f$ is a meromorphic modular function with constant term one in its Fourier expansion, then the function $\mathbb D(f)$ is the holomorphic part of the Fourier expansion of a weight two sesquiharmonic Maass form $\hat{\mathbb D}(f)$.
%and we have
%\[
%\xi_2(\hat{\mathbb D}(f)) = -\frac{1}{2\pi} \log |f|.
%\]
%\end{thm}

\subsection{Hecke operators}

Throughout this section, we let $f$ be a $\Gamma$-invariant function of weight $k\in 2\Z$ with Fourier expansion
$$
f(\t)=\sum_{n\in \Z} a(n,y) q^n.
$$
For a positive integer $m$, consider the set
$$
\mathcal A_m:=\{\a=\sm a&b\\c&d\esm \mid a,b,c,d\in\Z,\ \det\a=m\}
$$
and let $\alpha_1,\alpha_2,\dots,\alpha_s$ denote a set of right coset representatives for the action of $\G$ on $\mathcal A_m$.
The $m$-th normalized Hecke operator $T_m$ acting on $f$ is defined by
\begin{equation}\label{hp}
f(\t)| T_m:=\sum_{\alpha\in\G\backslash\mathcal A_m} f|_k[\alpha_i].
\end{equation}
In the case when $m=p$ is prime, its action on Fourier expansions can be described as
\begin{equation}\label{hpupvp}
f(\t)| T_p
= p^{-k/2}\sum_{i=0}^{p-1} f\!\left(\frac{\t+i}{p}\right)+p^{k/2}f(p\t).
\end{equation}

Hecke operators form a commutative algebra and satisfy the following multiplicative property:
\begin{prop}\label{Hecke2} We have, for any distinct primes $p$ and $\ell$ and positive integers $r$ and $t$,
\begin{enumerate}
\item[(1)] $T_{p^r}\circ T_{\ell^t}=T_{\ell^t}\circ T_{p^r}$,
\item[(2)] $T_{p^r}\circ T_{p}=T_{p^{r+1}} + pT_{p^{r-1}}$.
\end{enumerate}
\end{prop}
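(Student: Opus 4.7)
The plan is to prove both identities by enumerating the Hermite normal form representatives of $\G\backslash\mathcal{A}_m$, namely $\sm a & b \\ 0 & d \esm$ with $ad=m$ and $0\le b<d$, and by tracking the resulting compositions of slash operators. Two such matrices lie in the same right $\G$-coset if and only if their diagonal entries coincide and their upper-right entries agree modulo the lower-right entry. A key observation, immediate from \eqref{slash}, is that $f|_k[pI]=f$, and hence $f|_k[p\alpha]=f|_k[\alpha]$ for every $\alpha$; this allows us to express $T_{p^{r-1}}$ as a sum of slash actions indexed by matrices of determinant $p^{r+1}$, placing it on the same footing as $T_{p^{r+1}}$ and $T_{p^r}\circ T_p$ for the purposes of counting multiplicities.

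For part (1), since the divisor sum $\sigma$ is multiplicative, when $\gcd(p^r,\ell^t)=1$ the cardinalities $|\G\backslash\mathcal{A}_{p^r}|\cdot|\G\backslash\mathcal{A}_{\ell^t}|$ and $|\G\backslash\mathcal{A}_{p^r\ell^t}|$ coincide, and a direct check via Hermite forms using coprime factorizations shows that multiplication $(\alpha,\beta)\mapsto\alpha\beta$ induces a bijection $\G\backslash\mathcal{A}_{p^r}\times\G\backslash\mathcal{A}_{\ell^t}\to\G\backslash\mathcal{A}_{p^r\ell^t}$. Consequently $(f|T_{p^r})|T_{\ell^t}=\sum_{\alpha,\beta}f|_k[\alpha\beta]=f|T_{p^r\ell^t}$, and reversing the roles of the two factors produces the same sum, yielding commutativity.

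For part (2), I would form all products of the Hermite representatives $\alpha_{a,b}=\sm p^a & b \\ 0 & p^{r-a}\esm$ of $\mathcal{A}_{p^r}$ with the representatives $S_i=\sm 1 & i \\ 0 & p\esm$ ($0\le i<p$) and $S_\infty=\sm p & 0 \\ 0 & 1\esm$ of $\mathcal{A}_p$, obtaining $\alpha_{a,b}S_i=\sm p^a & p^ai+pb \\ 0 & p^{r+1-a}\esm$ and $\alpha_{a,b}S_\infty=\sm p^{a+1} & b \\ 0 & p^{r-a}\esm$. Reducing each to Hermite form for $\mathcal{A}_{p^{r+1}}$ and tallying, one finds that the representative $\sm p^c & d \\ 0 & p^{r+1-c}\esm$ appears with multiplicity $1$ when $c\in\{0,r+1\}$ or $p\nmid d$, and with multiplicity $p+1$ when $1\le c\le r$ and $p\mid d$. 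On the other side, $T_{p^{r+1}}$ contributes multiplicity $1$ to every such representative, while $pT_{p^{r-1}}$, rewritten via $f|_k[\alpha]=f|_k[p\alpha]$ as a slash action by matrices of the form $\sm p^{c'+1} & pd' \\ 0 & p^{r-c'}\esm$, contributes multiplicity $p$ to precisely those with $1\le c\le r$ and $p\mid d$. Matching multiplicities term by term completes the proof.

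The main obstacle is the multiplicity count in part (2): verifying that for fixed $c\ge1$ and $d$ with $p\mid d$, the congruence $p^c i+pb\equiv d\pmod{p^{r+1-c}}$ with $(i,b)\in[0,p)\times[0,p^{r-c})$ has exactly $p$ solutions (and none when $p\nmid d$). After dividing by $p$, this reduces to $p^{c-1}i+b\equiv d/p\pmod{p^{r-c}}$; for each of the $p$ choices of $i$ there is a unique $b\in[0,p^{r-c})$ satisfying the congruence, giving the claimed count.
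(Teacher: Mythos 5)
Your proof is correct. The paper itself states Proposition \ref{Hecke2} without proof, treating it as a classical fact about Hecke operators, so there is no argument in the paper to compare against; what you give is the standard textbook derivation via Hermite normal form representatives of $\G\backslash\mathcal A_m$, and all of your multiplicity counts check out (in particular, the key count that $p^{c}i+pb\equiv d\pmod{p^{r+1-c}}$ has exactly $p$ solutions when $c\ge 1$ and $p\mid d$, and none when $p\nmid d$, together with the observation that $f|_k[pI]=f$ under the paper's normalization \eqref{slash}, which lets $pT_{p^{r-1}}$ account for exactly the excess multiplicity on the representatives with $1\le c\le r$ and $p\mid d$).
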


We next define a multiplicative Hecke operator acting on $\mathcal M(\G)$, the multiplicative group of integer weight meromorphic modular forms for $\G$.
We let $\mathcal M_{k}(\G)\subset\mathcal M(\G)$ denote the subset that consists of modular forms of weight $k$.
 
Let $p$ be prime and $f\in \mathcal M_{k}(\G)$.  Following Guerzhoy \cite{Guer}, we define the multiplicative Hecke operator $\mathcal T(p)$ acting on $\mathcal M(\G)$ by
\begin{equation}\label{mtp}f|\mathcal T(p):=f|\mathcal T_{k}(p):=\varepsilon p^{k(p-1)/2}\prod_{i=1}^{p+1}f|_k[\alpha_i],\end{equation}
where $\varepsilon$ is a constant chosen so that the leading coefficient of $f|\mathcal T(p)$ is $1$ and  $\alpha_i$ are the right coset representatives of the action of $\G$ for the set of $2\times 2$ matrices with integer entries and determinant $p$. 
Then $\mathcal T(p)$ maps elements of $\mathcal M_{k}(\G)$ to elements of $\mathcal M_{(p+1)k}(\G)$. 

\subsection{Hecke equivariance of the Borcherds Isomorphism}

Let $M^!_{1/2}(4)$ denote the space of weakly holomorphic modular forms of weight $1/2$ for $\G_0(4)$ which satisfy Kohnen's plus-space condition. 
The basis elements of $M^{!}_{1/2}(4)$ found by Zagier are given by
$$f_d=q^{-d}+\sum_{\substack{0<n \equiv0,1\ (4)}}A(n,d)q^n$$
for all negative discriminants $-d$. We let $\Delta>1$.  By \cite[Theorem 6.1]{BO}, the generalized Borcherds product of $f_d$,
$$\MB(f_d)=\Psi_\Delta(\t,f_d):=\Psi_{\Delta,r}(\t,f_d)=\prod_{n=1}^\i\prod_{b(\Delta)}\lt(1-e^{2\pi i \lt(\frac{b}{\Delta}\rt)}q^n\rt)^{(\frac{\Delta}{b})A(\Delta n^2,d)}$$
is a weight $0$ meromorphic modular form for the group $\G$ with a unitary character. 
It was shown in (\cite[Eq. (8.2)]{BO}) that 
\begin{equation}\label{bp}\MB(f_d)%=\prod_{\lambda\in L_{\Delta d}/\G}(j(\t)-j(Z(\lambda))^{\chi_\Delta(\lambda)}
=\prod_{Q\in \mathcal Q_{\Delta d}/\G}(j(\t)-j(\alpha_Q))^{\chi_\Delta(Q)/|\bar{\G}_Q|},
\end{equation}
where $\G_Q$ is the stabilizer of $Q$ in ${\G}$.
Hence we deduce that
\begin{equation*}\label{tr}(\mathbb D\circ \MB)(f_d)=-\sum_{n=0}^\i \lt(\sum_{Q\in\mathcal Q_{d\Delta}/\G}\frac{\chi_\Delta(Q)}{|\bar{\G}_Q|}\J_{1,n}(\a_Q)\rt)q^n.
\end{equation*}
Thus we may write  the composition $\mathbb D\circ \MB$ lift of $f_d$ as the generating function of the twisted trace of singular moduli:
\begin{equation*}\label{gtr}(\mathbb D\circ \MB)(f_d)=-\sum_{n=0}^\i Tr_{\Delta,d}(\J_{1,n})q^n.
\end{equation*}
 Guerzhoy \cite{Guer} proved that the Borcherds isomorphism is Hecke equivariant under the multiplicative Hecke operator acting on the integral weight meromorphic modular forms and the usual Hecke operator 
$ pT_{1/2}(p^2)$
acting on the half integral weight forms. That is,
\begin{equation}\label{gbhe}
\MB(\phi)|\mathcal T(p)=\MB(\phi|pT_{1/2}(p^2)).
\end{equation}

\section {Proof of Theorem \ref{main-HE}}\label{Proofmain}

We first treat the case $h=0$.
It is proved in \cite[Corollary 4.6 and (4.4)]{JKKM} that
\begin{equation}\label{inndiv}
\lt\langle j_{1,n}, \log(y^{k/2}|f|)\rt\rangle=
\begin{cases}
-2\pi\sum_{z\in \G\backslash \mathbb{H}}\frac{\hbox{ord}_{z}(f)}{\omega_{z}}\mathbb{J}_{1,n}(z),  &  \hbox{ if } n>0 \\
-2\pi\sum_{z\in \G\backslash \mathbb{H}}\frac{\hbox{ord}_{z}(f)}{\omega_{z}}\mathbb{J}_{1,0}(z)+\frac{\pi}{3}kc,  &  \hbox{ if } n=0. \\
\end{cases}
\end{equation}
where 
$j_{1,n}=F_{1,0,-n}(\tau,1)$ for $n>0$, $j_{1,0}=1$ and
$c:= 1-12 \zeta'(-1) - \log (4\pi)$. 

Now we set
\begin{align*}%\label{Df}
\hat{\mathbb{D}}(f)&:=\sum_{n=0}^\i a^{++}(n)q^n+\sum_{n\ll \i}^\i a^{+-}(n,y)q^n+\sum_{n\gg -\i}^\i a^{--}(n,y)q^n,\\
\hat{\mathbb{D}}(f|\mathcal T(p))&:=\sum_{n=0}^\i b^{++}(n)q^n+\sum_{n\ll \i}^\i b^{+-}(n,y)q^n+\sum_{n\gg -\i}^\i b^{--}(n,y)q^n.
\end{align*}
Then we obtain by \eqref{inndiv} that for $n>0$, 
\begin{align*}
a^{++}(n)&=-\frac{1}{2\pi}\lt\langle j_{1,n}, \log(y^{k/2}|f|)\rt\rangle \ \textrm{and}\\
b^{++}(n)&=-\frac{1}{2\pi}\lt\langle j_{1,n}, \log(y^{k(p+1)/2}|f|\mathcal T(p)|)\rt\rangle
\end{align*}
and for $n=0$,
\begin{align*}
a^{++}(0)&=-\frac{1}{2\pi}\lt\langle j_{1,0}, \log(y^{k/2}|f|)\rt\rangle
+\frac{kc}{6}
 \ \textrm{and}\\
b^{++}(0)&=-\frac{1}{2\pi}\lt\langle j_{1,0}, \log(y^{k(p+1)/2}|f|\mathcal T(p)|)\rt\rangle
+\frac{k(p+1)c}{6}.
\end{align*}

Considering \eqref{hpupvp}, we prove \eqref{maineq2} in the case $h=0$ by establishing 
\begin{equation}\label{goal1}
b^{++}(n)=
\begin{cases} 
a^{++}(pn)+pa^{++}(n/p), 
& \hbox{ if  } n>0 \\
(p+1) a^{++}(0) - \frac{k}{12}(p-1) \log p &
\hbox{ if } n=0.
\end{cases} 
\end{equation}
For simplicity, we denote $\mathcal Tf:=f|\mathcal T(p)$. Note that  $\mathcal Tf$ has weight $k(p+1)$, while $\log(y^{k/2}|f|)$ has weight 0 for $\G$. 
Then by the definition of the action of the multiplicative Hecke operator, we find that  
\begin{align}\label{inn1}
\lt\langle j_{1,n}, \log(y^{k(p+1)/2}|\mathcal Tf|)\rt\rangle&=\lt\langle j_{1,n}, \log\lt(y^{k(p+1)/2}\lt|\varepsilon p^{k(p-1)/2}f|_k\mat p&0\\0&1\emat\prod_{i=0}^{p-1}f|_k\mat 1&i\\0&p\emat\rt|\rt)\rt\rangle
\notag \\
&=\lt\langle j_{1,n}, \log\lt(y^{k/2}|f(p\t)|\rt)\rt\rangle+\sum_{i=0}^{p-1}\lt\langle j_{1,n}, \log\lt(y^{k/2}|f(\frac{\t+i}{p})|\rt)\rt\rangle \notag \\
&=\lt\langle j_{1,n}, \log\lt((py)^{k/2}|f(p\t)|\rt)\rt\rangle+\sum_{i=0}^{p-1}\lt\langle j_{1,n}, \log\lt((y/p)^{k/2}|f(\frac{\t+i}{p})|\rt)\rt\rangle\notag\\
&\quad+\lt\langle j_{1,n}, -k/2\log p\rt\rangle+p\lt\langle j_{1,n}, k/2\log p\rt\rangle\notag\\
&=\lt\langle j_{1,n}, \lt(\log y^{k/2}|f(\t)|\rt)|{T_p}\rt\rangle+\frac{(p-1)k}{2}\log p \langle j_{1,n},1\rangle.
\end{align}
Hence we observe that for $n>0$,
\begin{align*}
b^{++}(n)&=-\frac{1}{2\pi}\lt\langle j_{1,n}, \log(y^{k(p+1)/2}|\mathcal{T}f|)\rt\rangle\\
&=-\frac{1}{2\pi}\lt\langle j_{1,n}, \lt(\log y^{k/2}|f(\t)|\rt)|T_p\rt\rangle-(p-1)\frac{k}{4\pi}\log p \langle j_{1,n},1\rangle
\hbox{ \, \, by \eqref{inn1}} \\
&=-\frac{1}{2\pi}\lt\langle j_{1,n}|T_p, \lt(\log y^{k/2}|f(\t)|\rt)\rt\rangle
-(p-1)\frac{k}{4\pi}\log p \langle j_{1,n},1\rangle
 \\
&=-\frac{1}{2\pi}\lt\langle j_{1,pn}+pj_{1,n/p}, \lt(\log y^{k/2}|f(\t)|\rt)\rt\rangle
-(p-1)\frac{k}{4\pi}\log p \langle j_{1,n},1\rangle
 \\
&= a^{++}(pn)+pa^{++}(n/p)
-(p-1)\frac{k}{4\pi}\log p \langle j_{1,n},1\rangle 
\\
&= a^{++}(pn)+pa^{++}(n/p) 
\hbox{ since } \langle j_{1,n},1\rangle =0 
\hbox{ by \cite[(1.5)]{DIT} }.
\end{align*}
Here, the third equality follows from the fact that the Hecke operator $T_p$ is self-adjoint for the regularized Petersson inner product.

In the case $n=0$, we have
\begin{align*}
b^{++}(0)&=-\frac{1}{2\pi}\lt\langle j_{1,0}, \log(y^{k(p+1)/2}|\mathcal{T}f|)\rt\rangle 
+ \frac{k(p+1)c}{6} \\
&=-\frac{1}{2\pi}\lt\langle j_{1,0}, \lt(\log y^{k/2}|f(\t)|\rt)|T_p\rt\rangle-(p-1)\frac{k}{4\pi}\log p \langle j_{1,0},1\rangle
+ \frac{k(p+1)c}{6}
\hbox{ \, \, by \eqref{inn1}} \\
&=-\frac{1}{2\pi}\lt\langle j_{1,0}|T_p, \lt(\log y^{k/2}|f(\t)|\rt)\rt\rangle
-(p-1)\frac{k}{4\pi}\log p \langle j_{1,0},1\rangle
 + \frac{k(p+1)c}{6} \\
&=-\frac{1}{2\pi}\lt\langle (p+1)j_{1,0}, \lt(\log y^{k/2}|f(\t)|\rt)\rt\rangle
-(p-1)\frac{k}{4\pi}\log p \langle j_{1,0},1\rangle
+ \frac{k(p+1)c}{6}
 \\
&= (p+1) a^{++}(0)
-(p-1)\frac{k}{4\pi}\log p \langle j_{1,0},1\rangle 
\\
&= (p+1) a^{++}(0) - \frac{k}{12} (p-1) \log p
\hbox{ since } \langle j_{1,0},1\rangle =\langle 1,1\rangle =\hbox{vol}(\G\backslash \H)=\frac{\pi}{3} .
\end{align*}
This proves \eqref{goal1}, and thus \eqref{maineq2} holds when $h=0$.

We now consider the general case. Since ${\rm ord}_{\infty} f=h$ and 
${\rm Coeff}_{q^h} f=1$, we can write $f=q^h\prod_{n=1}^\i(1-q^n)^{\ell(n)}.$ 
If we define $\displaystyle{g(\t)=\frac{f(\t)}{\Delta^h(\t)}}$, then applying \eqref{maineq2} in the case $h=0$ to $g$, we obtain 
\begin{equation}\label{tp}\mathbb D(g|\mathcal T(p))=\mathbb D(g)|T_p-\left(\frac{k}{12}-h\right) (p-1) \log p.
\end{equation} 
The left-hand side of \eqref{tp} is equal to
$$\mathbb D\lt(\frac{f}{\Delta^h}|\mathcal T(p)\rt)=\mathbb D\lt(\frac{f|\mathcal T(p)}{(\Delta|\mathcal T(p))^{h}}\rt)=\mathbb D\lt(\frac{f|\mathcal T(p)}{\Delta^{(p+1)h}}\rt)=\mathbb D(f|\mathcal T(p))-(p+1)h\mathbb D(\Delta)=\mathbb D(f|\mathcal T(p)),$$
and the right-hand side of \eqref{tp} is equal to
\begin{align*}
\mathbb D\lt(\frac{f}{\Delta^h}\rt)|T_p -\left(\frac{k}{12}-h\right) (p-1) \log p
&=\lt(\mathbb D(f)-h\mathbb D(\Delta)\rt)|T_p -\left(\frac{k}{12}-h\right) (p-1) \log p \\
&=\mathbb D(f)|T_p-\left(\frac{k}{12}-h\right) (p-1) \log p.
\end{align*}
Comparing the above identities, we obtain \eqref{maineq2}.

\section{Proof of Corollaries \ref{app2} and \ref{app3}}
\begin{proof}[Proof of Corollary \ref{app2}]
Let $z\in\mathbb H$ be arbitrary.  Then $$\mathbb D(j(\t)-j(z))=\sum_{n=0}^\i \J_{1,n}(z)q^n.$$
Hence, for a prime $p$, 
$$
\mathbb D(j(\t)-j(z))|T_p=\sum_{n=0}^\i \lt(\J_{1,pn}(z)+p\J_{1,n/p}(z)\rt)q^n.
$$
On the other hand, if we let $\Phi_m(X,Y)\in\Z[X,Y]$ denote the $m$-th modular polynomial which is defined by the property
$$\Phi_m(X,j(\t)):=\prod_{\alpha\in\G\backslash \mathcal A_m}(X-j(\alpha\circ \t)),$$
then by the symmetry of $\Phi_n(X,Y)$, we obtain 
\begin{align*}
(j(\t)-j(z))|\mathcal T(p)&=(j(z)-j(p\t))(j(z)-j(\t/p))\cdots(j(z)-j((\t+p-1)/p))\\
&=(j(\t)-j(pz))(j(\t)-j(z/p))\cdots(j(\t)-j((z+p-1)/p)).
\end{align*}
Thus 
\begin{align*}\mathbb D((j(\t)-j(z)|\mathcal T(p))&=\sum_{n=0}^\i \J_{1,n}(pz)q^n+\sum_{n=0}^\i \J_{1,n}\lt(\frac{z}{p}\rt)q^n+\cdots+\sum_{n=0}^\i \J_{1,n}\lt(\frac{z+p-1}{p}\rt)q^n\\
&=\sum_{n=0}^\i \lt(\J_{1,n}(z)|T_p\rt) q^n.
\end{align*}
Corollary \ref{app2} is now valid as a result of Theorem \ref{main-HE}.
\end{proof}

\begin{proof}[Proof of Corollary \ref{app3}]
{\rm (i)} In this section, we let  $p$ be a prime and  $r\in \mathbb N$. 
We begin by proving the equation 
\begin{equation}\label{cc}
\J_{1,1} | T_{p^r} = \J_{1,p^r}
\end{equation} via induction on $r$.  For the case when $r=1$, the equality holds by Corollary \ref{app2}. The inductive step follows from Proposition \ref{Hecke2} (2) and Corollary \ref{app2} as follows:
\begin{align*}
\J_{1,1} | T_{p^{r+1}}& = (\J_{1,1} | T_{p^r})|T_p - p \J_{1,1} | T_{p^{r-1}} \\
& = \J_{1,p^r}| T_p - p \J_{1,p^{r-1}}=\J_{1,p^{r+1}}+p\J_{1,p^{r-1}} - p\J_{1,p^{r-1}}= \J_{1,p^{r+1}}.
\end{align*}
With the same argument, we can show that for any distinct primes $p$ and $q$ and $r,s\in\mathbb N$,
\begin{equation}\label{cc1}
\J_{1,p^r} | T_{q^s} = \J_{1,p^rq^s}.
\end{equation}
Now if $n=p_1^{r_1}p_2^{r_2}\cdots p_t^{r_t}$, where $p_1, \ldots, p_t$ are distinct primes, then by applying Proposition~\ref{Hecke2}~(1), \eqref{cc}, and \eqref{cc1} in turn, we establish Corollary  \ref{app3}:
\begin{align*}
\J_{1,1} | T_{n} &= (\J_{1,1} | T_{p_1^{r_1}})|T_{p_2^{r_2}} \cdots T_{p_t^{r_t}}
=(\J_{1,p_1^{r_1}})|T_{p_2^{r_2}} \cdots T_{p_t^{r_t}}\\
& =(\J_{1,1} | T_{p_1^{r_1}p_2^{r_2}})|T_{p_3^{r_3}} \cdots T_{p_t^{r_t}}=\cdots=\J_{1,n}.
\end{align*}

\noindent {\rm (ii)} We first compute the action of $T_{p^r}$ by induction on $r$.
Using the Hecke relation
\[
T_{p^{r+1}}=T_{p^r}T_p-p\,T_{p^{r-1}},
\]
together with \eqref{p-pli2}, a straightforward induction yields
\begin{equation}\label{tpr}
\J_{1,0}\,\big|\,T_{p^r}
=
(1+p+\cdots+p^r)\,\J_{1,0}
-
r\,p^{r-1}(p-1)\log p.
\end{equation}

From equation~\eqref{tpr} together with the multiplicativity of the Hecke operators, the result follows.
\end{proof}

\section{Proof of Corollaries \ref{cong} and \ref{app1}}

Let $\mathbb M^{\textrm{s}}_2(\G)$ denote the space of the holomorphic parts of weight~$2$ sesquiharmonic Maass forms.
By \eqref{gbhe}, the left square in the diagram below is commutative
and 
by Theorem \ref{main-HE}, the right square is commutative
up to the addition of constant $-\left( \frac{k}{12}-h\right)(p-1)\log p$:
$$\begin{tikzcd}
M^!_{1/2}(4) \arrow[r,"\MB"] \arrow[d, "pT_{1/2}(p^2)"] & \mathcal M(\G) \arrow[r,"\mathbb D"] \arrow[d, "\mathcal{T}(p)"] & \mathbb M^{\textrm{s}}_2(\G) \arrow[d, "T_p"] \\
M^!_{1/2}(4)  \arrow[r,"\MB"] & \mathcal M(\G) \arrow[r,"\mathbb D"] & \mathbb M^{\textrm{s}}_2(\G)  \\
\end{tikzcd}$$

The identical approach employed to establish Theorem 4.1 in \cite{JKK-hequiv} can be utilized to obtain Corollary~\ref{cong}.

%\begin{prop}\label{cong} Let $p$ be a prime not dividing a discriminant $\Delta>1$ and $m, n$ be  non-negative integers with $n>0$. For a negative discriminant $-d$, we write $d=p^{2u}d'$ where $p^2\nmid d'$ and $u\geq 0$. Then when $\ell={\rm min}({\rm ord}_p(n),m)$, we have
%\begin{equation*}
%\displaystyle\sum_{t=0}^{\ell} p^{t}Tr_{\Delta,d}\left(\J_\frac{p^m n}{p^{2t}}\right) =\begin{cases}\displaystyle \sum_{t=0}^m p^{m-t}  Tr_{\Delta,p^{2u-2m+4t}d'}(\J_n),\hbox{ if } 0\leq m<u,\\
%\displaystyle\sum_{t=0}^{m-u}  \left(\frac{-d'}{p}\right)^{m-u-t}p^u Tr_{\Delta,p^{2t}d'}(\J_n)+%\displaystyle\sum_{t=1}^u p^{u-t}  Tr_{\Delta,p^{2m-2u+4t}d'}(\J_n),\hbox{ if }  m \geq u.
%\end{cases}
%\end{equation*}
%\end{prop}

In particular, when $\Delta > 1$, $p$ is a prime not dividing $d$, and $d$ is square-free, we have
\begin{equation}\label{psquare}
(p+1)\mathrm{Tr}_{\Delta,d}(\J_{1,0})=\lt(\frac{-d}{p}\rt)\mathrm{Tr}_{\Delta,d}(\J_{1,0})+\mathrm{Tr}_{\Delta,p^2d}(\J_{1,0}).
\end{equation}
According to \cite[Theorem 4.1]{JKK-hequiv}, we also have
\begin{equation}\label{psquare0}
(p+1)\mathrm{Tr}_{\Delta,d}(1)=\lt(\frac{-d}{p}\rt)\mathrm{Tr}_{\Delta,d}(1)+\mathrm{Tr}_{\Delta,p^2d}(1).
\end{equation}
Given that $\mathbbm{f}(\t)=-\log\left( y \left| \eta(\t) \right|^4\rt)=-\J_{1,0}(\t)+1$, from \eqref{psquare} and \eqref{psquare0}, we deduce that
\begin{equation}\label{psquare2}
\lt( p+1- \lt(\frac{-d}{p}\rt)\rt)\mathrm{Tr}_{\Delta,d}\lt(-\log\left( y \left| \eta(\t) \right|^4\rt)\rt)=\mathrm{Tr}_{\Delta,p^2d}\lt(-\log\left( y \left| \eta(\t) \right|^4\rt)\rt).
\end{equation}
Therefore, if $-d$ is a fundamental discriminant, then by \eqref{TMcor14}, we have 
\begin{equation}\label{psquare3}
\mathrm{Tr}_{\Delta,p^2d}\lt(-\log\left( y \left| \eta(\t) \right|^4\rt)\rt)=\lt( p+1- \lt(\frac{-d}{p}\rt)\rt)\sqrt{d} L_{-d}(1)\frac{h(\Delta)\log\varepsilon_{\Delta}}{\pi}.
\end{equation}
To complete the proof, suppose that $-d=D\mathfrak{f}^2$ with a fundamental discriminant $D$ and $\gcd(D,\mathfrak{f})=1$, and $\mathfrak f$ has the prime factorization $\mathfrak f=p_1p_2\cdots p_r$.
Using \eqref{psquare3} inductively, we obtain Corollary \ref{app1}.

For example, let $\Delta=5$, $-d=-4$ and $p=3$. We can then compute the following using \eqref{TMcor14}:
$$\mathrm{Tr}_{5,4}(\J_{1,0})=\sqrt{4} L_{-4}(1) \frac{h(5)\log\varepsilon_5}{\pi}=\log(1+\sqrt2).$$
Consequently, from Corollary \ref{app1}, we obtain 
 $$\mathrm{Tr}_{5,36}(\J_{1,0})=\lt(4-\lt(\frac{-4}{3}\rt)\rt)\mathrm{Tr}_{5,4}(\J_{1,0})=5\log(1+\sqrt2).$$

\end{document}